\newtheorem{theorem}{Theorem}[section]
\def\D{\mathcal{D}}
\def\B{\mathcal{B}}
\def\E{\mathcal{E}}
\DeclareMathOperator{\der}{der}
\DeclareMathOperator{\Aut}{Aut}
\DeclareMathOperator{\AG}{AG}
\DeclareMathOperator{\PG}{PG}
\DeclareMathOperator{\Mob}{\textnormal{M\"{o}b}}
\DeclareMathOperator{\AGL}{AGL}
\DeclareMathOperator{\PSL}{PSL}
\DeclareMathOperator{\PGL}{PGL}
\DeclareMathOperator{\SQS}{SQS}
\DeclareMathOperator{\CSQS}{CSQS}
\DeclareMathOperator{\RoSQS}{RoSQS}
\DeclareMathOperator{\STS}{STS}
\DeclareMathOperator{\SL}{SL}
\begin{document}

\title[Steiner $3$-designs as extensions]{Steiner $3$-designs as extensions$^\star$}

\author[M.~Kiermaier, V.~Kr\v{c}adinac, and A.~Wassermann]{Michael Kiermaier$^1$, Vedran Kr\v{c}adinac$^2$,\\and Alfred Wassermann$^1$}

\address{$^1$Department of Mathematics, University of Bayreuth, 95440 Bayreuth, Germany}

\address{$^2$Faculty of Science, University of Zagreb, Bijeni\v{c}ka cesta~30, HR-10000 Zagreb, Croatia}

\keywords{Steiner system; block design; automorphism group}

\email{michael.kiermaier@uni-bayreuth.de}
\email{vedran.krcadinac@math.hr}
\email{alfred.wassermann@uni-bayreuth.de}

\thanks{$^\star$Part of this work was done while the second author was
visiting the University of Bayreuth supported by a scholarship from the
German Academic Exchange Service (DAAD)}

\subjclass{05B05}

\keywords{Steiner system; block design; automorphism group}

\date{September 27, 2025}

\begin{abstract}
In this article, we construct a Steiner system with the parameters $S(3,6,42)$, settling one of the smallest open parameter sets of Steiner $3$-designs. Furthermore, we establish the existence of rotational Steiner quadruple systems on $46$ and $92$ points.

Our construction method is based on extending Steiner $2$-designs using prescribed extension groups. We also consider extensions to designs of higher strength. The article includes a table and a discussion of the status of all admissible parameters for Steiner $3$-designs on at most $50$ points.
\end{abstract}

\maketitle

\section{Introduction}

Steiner systems are among the most fascinating objects in
Design Theory, if not all of Combinatorics. They have
a long history~\cite{DR80} and many applications, as well as
connections to other important combinatorial structures~\cite{CM07}.
A \emph{Steiner system} $\D=(X,\B)$ with parameters $S(t,k,v)$
comprises a set~$X$ of $v$ \emph{points}, and a
family~$\B$ of $k$-element subsets of~$X$ called \emph{blocks}
such that every $t$-subset of points is contained in exactly
one block. Alternatively, $\D$ is called a \emph{Steiner
$t$-design} and the parameters are written as $t$-$(v,k,1)$.
A necessary condition for the existence of Steiner systems
is that
\begin{equation}\label{divcond}
{k-i\choose t-i} \mbox{ divides } {v-i\choose t-i},\kern 2mm
\mbox{ for all } i\in\{0,\ldots,t\}.
\end{equation}
Given a point $P\in X$, the \emph{derived design} $\der_P \D$
has $X\setminus\{P\}$ as points and
$\{B\setminus\{P\} \mid B\in \B,\, P\in B\}$ as blocks,
and is a Steiner system $S(t-1,k-1,v-1)$ (for $t\ge 1$).
We refer to~\cite{BJL99} and~\cite{CD07} for other results
and background material about combinatorial designs.

It was proved by probabilistic arguments that Steiner
$t$-designs exist for all~$t$~\cite{PK24, GKLO23}.
However, explicit examples are known only for $t\le 5$,
and only finitely many explicit examples are known for $t\ge 4$.
Many known designs were constructed by prescribing
groups of automorphisms and using computational methods,
such as the Kramer-Mesner method~\cite{KM76}.
In the recent paper~\cite{KKTVKW25}, this method was
fine-tuned for Steiner systems. Implementations of the
different computational steps are available in the
GAP~\cite{GAP} package \emph{Prescribed Automorphism
Groups}~\cite{PAG}. The paper~\cite{KKTVKW25} focuses
on Steiner $2$-designs, but the method can
be used for arbitrary $S(t,k,v)$ designs, subject
to time and memory limitations.

The purpose of this paper is to construct explicit examples
of small Steiner $3$-designs. There are $25$ admissible
parameter sets $S(3,k,v)$ with $v\le 50$. Previously,
designs were known for $20$ of these parameter sets,
nonexistence was established for one, and existence was an
open problem for the remaining four. Our main result is the
construction of a design with parameters $S(3,6,42)$, settling
one of the open cases. Furthermore, we establish the existence
of rotational Steiner quadruple systems for $v=46$
and $v=92$, and we construct many new Steiner designs
with parameters for which existence was already known.
We made all constructed designs available in GAP-readable
format, compatible with the DESIGN~\cite{LHS24} package.
They can be accessed as a Zenodo data set~\cite{Zenodo}
and through our web page
\begin{center}
\url{https://web.math.hr/~krcko/results/steiner3.html}.
\end{center}

In Section~\ref{sec2}, we present a table of admissible
parameters $S(3,k,v)$ with $v\le 50$, containing information
on the number of non-isomorphic designs and extensibility.
Examples of the newly established designs are given by
listing the prescribed groups and base blocks
(Theorems~\ref{rosqs46}, \ref{rosqs92}, and~\ref{tm1}).

In Section~\ref{sec3}, we describe a procedure for
extending Steiner systems with prescribed extension
groups, based on the Kramer-Mesner method. The new
$S(3,6,42)$ design was constructed in this way. We
also tried constructing designs for the remaining three
open parameter sets $S(3,6,46)$, $S(3,5,41)$, and
$S(3,5,50)$. No such design could be found, but many
new Steiner $2$-designs for the derived parameters
$S(2,5,45)$, $S(2,4,40)$, and $S(2,4,49)$ were
constructed in this process (Theorems~\ref{tm2-5-45},
\ref{tm2-4-40}, and~\ref{tm2-4-49}).

Finally, in Section~\ref{sec4}, we discuss extensions
of $S(3,k,v)$ designs to $t$-designs of strength
$t>3$. Using the procedure from the previous
section, we prove that the new $S(3,6,42)$ design
does not extend to a $4$-design if any nontrivial
extension group is assumed (Theorem~\ref{tm4-7-43}).
We also prove that the only known $S(5,7,28)$ design
does not extend to a $6$-design
(Theorem~\ref{tm6-8-29}).

\section{A table of admissible parameters}\label{sec2}

Table~\ref{tab1} contains all admissible parameters $S(3,k,v)$ with
$v\le 50$. Admissibility means that the divisibility
conditions~\eqref{divcond} and
Fisher's inequality for the derived designs are satisfied. The latter
condition eliminates only the parameters $S(3,7,22)$ in our range.
The first two columns contain~$v$ and~$k$, followed by the number~$b$
of blocks. The fourth column labeled `Nd' contains the number
of $S(3,k,v)$ designs up to isomorphism, or a lower bound if the
exact number is not known. Let~$t_{\max}$ be the largest integer~$t$
such that a $S(t,k+t-3,v+t-3)$ design exists. Note that, by taking
derived designs, all $S(t,k+t-3,v+t-3)$ with $t \leq t_{\max}$ do exist.
The fifth and the sixth columns contain the sharpest known lower and
upper bounds on~$t_{\max}$, respectively. The last two columns
contain references and comments.

\begin{table}[t]
\begin{tabular}{ccccccll}
\hline
$v$ & $k$ & $b$ & Nd & $t_{\max}$$\ge$ & $t_{\max}$$\le$ & References & Comments\\
\hline
8 & 4 & 14 & 1 & 3 & 3 & \cite{JAB08, EW38} & $\AG_2(3,2)$\\	
10 & 4 & 30 & 1 & 5 & 5 & \cite{JAB08, EW38, JAT94} & $\Mob(3)$ \\
14 & 4 & 91 & 4 & 3 & 3 & \cite{MH72} & \\	
16 & 4 & 140 & \num{1054163} & 3 & 3 & \cite{KOP06, OP08} & $\AG_2(4,2)$\\
17 & 5 & 68 & 1 & 3 & 3 & \cite{EW38} & $\Mob(4)$ \\
20 & 4 & 285 & $\ge$\num{60077} & 3 & 15 & & \\
22 & 4 & 385 & $\ge$\num{52240} & 5 & 17 & \cite{RHFD76} & \\
22 & 6 & 77 & 1	& 5 & 5 & \cite{EW38} &  \\
26 & 4 & 650 & $\ge$\num{51145} & 3 & 21 & & \\
26 & 5 & 260 & $\ge$1 & 5 & 8 & \cite{RHFD76,GGP87} &  \\
26 & 6 & 130 & 1 & 3 & 3 & \cite{YC72, RHFD73a, JAT94} & $\Mob(5)$ \\
28 & 4 & 819 & $\ge$\num{12149} & 3 & 3 & & Spherical \\
32 & 4 & \num{1240} & $\ge$\num{1516} & 3 & 27 & & $\AG_2(5,2)$\\	
34 & 4 & \num{1496} & $\ge$\num{1569} & 5 & 29 & \cite{BLW99} & \\	
37 & 7 & 222 & 0 & 1 & 1 & & $\Mob(6)$ \\
38 & 4 & \num{2109} & $\ge$\num{1547} & 3 & 3 & & \\	
40 & 4 & \num{2470} & $\ge$\num{1557} & 3 & 35 & & \\	
41 & 5 & \num{1066} & ? & 2 & 16 & & \\	
42 & 6 & 574 & $\ge$1 & 3 & 9 & & Theorem~\ref{tm1} \\	
44 & 4 & \num{3311} & $\ge$\num{1504} & 3 & 39 & & \\	
46 & 4 & \num{3795} & $\ge$\num{1559} & 5 & 41 & \cite{GGM92} & \\	
46 & 6 & 759 & ? & 2 & 3& & \\
50 & 4 & \num{4900} & $\ge$\num{1535} & 3 & 45 & & \\	
50 & 5 & \num{1960} & ? & 2 & 20 & & \\	
50 & 8 & 350 & 1 & 3 & 3 & \cite{RHFD73b, JAT94} & $\Mob(7)$ \\
\hline
\end{tabular}
\vskip 4mm
\caption{Steiner $3$-designs with at most $50$ points.}\label{tab1}
\end{table}

We first discuss the parameters from Table~\ref{tab1} belonging to known
infinite families of Steiner $3$-designs. The most widely studied family
are Steiner quadruple systems $S(3,4,v)$, abbreviated as $\SQS(v)$. Two
comprehensive surveys are~\cite{LR78} and~\cite{HP92}. $\SQS(v)$ are known to
exist for all admissible~$v$, i.e.\ for $v\equiv 2,4 \pmod{6}$, $v\ge 8$.
This was first proved in~\cite{HH60}, and simplifications
were given in~\cite{AH82, AH94, HL85, ZG13}.
A subfamily with parameters $S(2,4,2^n)$, $n\ge 3$ are the $n$-dimensional
affine spaces $\AG_2(n,2)$ over the binary field, with planes as blocks.
This family was already described by T.~P.~Kirkman~\cite{TPK847}.
Another subfamily are the spherical geometries of order $q=3$
with parameters $S(3,4,3^n+1)$, $n\ge 2$, discussed below.

The uniqueness of $\SQS(8)$ and $\SQS(10)$ was proved in~\cite{JAB08}
and~\cite{EW38}. In~\cite{MH72}, it was computationally proved that
there are exactly four $\SQS(14)$ up to isomorphism. A computer
enumeration of $\SQS(16)$ was performed in~\cite{KOP06}, requiring
about $12$ years of CPU time. We repeated the calculation using
different software and confirmed $\num{1054163}$ as the number of
$\SQS(16)$ up to isomorphism; see Section~\ref{sec3} for more
details. In~\cite{LR76}, it was established that there are at
least~$10^{17}$ non-isomorphic $\SQS(20)$, making a computer
enumeration infeasible. Asymptotic estimates for the number of
non-isomorphic $\SQS(v)$ for sufficiently large admissible~$v$ are
known~\cite{DV71, HL85a}.

In Table~\ref{tab1}, we give rough lower bounds on the number of
$\SQS(v)$ for $20\le v\le 50$, based on explicitly constructed examples.
It is clear that the actual numbers `Nd' are many orders of magnitude
larger. Our lower bounds are primarily due to memory considerations,
since we are making the designs available in~\cite{Zenodo}.
For $v\ge 32$, we stopped the search when the number of constructed
designs exceeded $1500$. The designs were constructed by prescribing
groups of automorphisms and using the implementation of the
Kramer-Mesner method from~\cite{PAG}, described in~\cite{KKTVKW25}.
\emph{Nauty / Traces}~\cite{MP14} was used for isomorphism tests
and to compute the full automorphism groups.

We aimed to choose the prescribed groups large enough to
make the computation feasible, while still producing many
non-isomorphic designs. For some parameters, there is a
``classical'' design~$\D$ with a large full automorphism group.
For example, one $\SQS(28)$ is the spherical geometry for
$q=n=3$, with full automorphism group $\Aut(\D)\cong
\PSL(2,27)\rtimes C_6$ of order \num{58968}. One example of a
$\SQS(32)$ is $\AG_2(5,2)$, with $\Aut(\D)\cong \AGL(5,2)$ of
order \num{319979520}. We can take subgroups $G\le \Aut(\D)$
and construct designs with~$G$ as prescribed group, possibly
getting other designs besides~$\D$. When there is no such
``classical'' design, we prescribed extensions of regular
permutation representations of groups of orders~$v$ and~$v-1$,
utilizing the GAP~\cite{GAP} library of small groups.

An $\SQS(v)$ is called \emph{cyclic} and denoted by $\CSQS(v)$
if it has a cyclic automorphism of order~$v$. It is called
\emph{rotational} and denoted by $\RoSQS(v)$ if it
has a cyclic automorphism of order~$v-1$ with a fixed point~$\infty$.
Designs of type $\CSQS(v)$ and $\RoSQS(v)$ have been extensively studied,
but the sets of all values~$v$ for which they exist have not
been fully determined. For $\CSQS(v)$, there is only one
unknown value below $100$, namely $v=94$~\cite[Theorem~4.6]{FC11}.
We were not able to construct a $\CSQS(94)$ with our method,
nor to rule it out. For $\RoSQS(v)$, there were seven open
values below $100$: $v=46$, $56$, $70$, $82$, $86$, $92$,
and $98$~\cite[Table~I]{JZ02}. We were able to construct a
$\RoSQS(46)$ and a $\RoSQS(92)$. Examples can be downloaded
from~\cite{Zenodo}, and here we describe one of each
by giving base blocks.

\allowdisplaybreaks

\begin{theorem}\label{rosqs46}
There exists a $\RoSQS(46)$.
\end{theorem}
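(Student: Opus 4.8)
The plan is to construct the design directly as a union of orbits of a prescribed group and then read off Theorem~\ref{rosqs46} from the definition. Put the point set as $X=\mathbb{Z}_{45}\cup\{\infty\}$ and let $G=\langle\sigma\rangle$ be the cyclic group of order $45$ acting by $\sigma\colon i\mapsto i+1\pmod{45}$ and $\sigma(\infty)=\infty$. Since a $\RoSQS(46)$ is by definition an $S(3,4,46)$ admitting a cyclic automorphism of order $45$ with a fixed point, it suffices to exhibit a $G$-invariant family $\B$ of $4$-subsets of $X$ such that every $3$-subset of $X$ lies in exactly one member of $\B$.

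To find such a $\B$ I would set up the Kramer-Mesner system for $(G,t=3,k=4,v=46)$ as in~\cite{KM76, KKTVKW25}: compute the orbits of $G$ on the $3$-subsets and on the $4$-subsets of $X$, separating those containing $\infty$ (which correspond to $G$-orbits on pairs, respectively triples, of $\mathbb{Z}_{45}$) from those avoiding $\infty$. Because $45$ is odd, every orbit has full length $45$ with the single exception of the orbit $\{\{x,x+15,x+30\}:x\in\mathbb{Z}_{45}\}$ of length $15$, coming from the unique subgroup of order $3$; this forces a solution to contain exactly one short block-orbit together with seven full ones through $\infty$ (reconstructing the derived $S(2,3,45)$) and $77$ full orbits of blocks missing $\infty$. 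Forming the $0/1$ incidence matrix $A=(a_{ij})$, with rows indexed by $3$-orbits, columns by $4$-orbits, and $a_{ij}$ the number of blocks of orbit $j$ through a fixed representative of orbit $i$, a $\RoSQS(46)$ is precisely a $0/1$ vector $x$ with $Ax=\mathbf{1}$.

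I would then solve this exact-cover problem computationally with the implementation in the PAG package~\cite{PAG}. To make the search tractable one may enlarge the prescribed group to $G\rtimes C_m$ for a suitable group of multipliers $m\mid 24$; this shrinks the Kramer-Mesner matrix while still guaranteeing the automorphism of order $45$, so any resulting design is still rotational. From a solution vector I would extract one base block per chosen orbit, record them in the paper, and finish the proof by verifying—either by a short direct computation or by counting incidences over the $\binom{46}{3}$ triples—that the $G$-orbits of these base blocks indeed form an $S(3,4,46)$.

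The main obstacle is entirely computational: the matrix $A$ has a few hundred rows and several thousand columns, and solving the associated Diophantine system is expensive, which is presumably why $\RoSQS(46)$ was listed as open. Success hinges on choosing the prescribed group judiciously—large enough to prune the search drastically, yet small enough that a solution survives—and on efficient lattice-point or clique-search techniques; once a solution is in hand, the verification that it yields a Steiner quadruple system is routine.
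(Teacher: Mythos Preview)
Your plan is correct and is exactly the approach the paper takes: the authors prescribe the cyclic group $\mathbb{Z}_{45}$ acting on $\{0,\ldots,44\}\cup\{\infty\}$, solve the resulting Kramer--Mesner system, and present the proof simply as the list of $85$ base blocks (your count of one short orbit plus $7+77$ full orbits is on the nose). The only minor difference is that the paper uses the bare $\mathbb{Z}_{45}$ without adjoining any multiplier, so the optional enlargement to $G\rtimes C_m$ turns out not to be needed here.
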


\begin{proof}
Let $G$ be the group $\mathbb{Z}_{45}=\{0,\ldots,44\}$ with addition
modulo~$45$. Then the union of the $G$-orbits of the $4$-subsets listed
below forms a $\RoSQS(46)$:
\begin{align*}
 & \{ 0, 1, 2, 21 \} & & \{ 0, 1, 3, 24 \} & & \{ 0, 1, 4, 22 \} & & \{ 0, 1, 5, 42 \} & & \{ 0, 1, 6, 41 \} \\[1mm]
 & \{ 0, 1, 7, 31 \} & & \{ 0, 1, 8, 27 \} & & \{ 0, 1, 9, 19 \} & & \{ 0, 1, 10, 13 \} & & \{ 0, 1, 11, \infty \} \\[1mm]
 & \{ 0, 1, 12, 37 \} & & \{ 0, 1, 14, 35 \} & & \{ 0, 1, 15, 25 \} & & \{ 0, 1, 16, 40 \} & & \{ 0, 1, 17, 26 \} \\[1mm]
 & \{ 0, 1, 18, 28 \} & & \{ 0, 1, 23, 33 \} & & \{ 0, 1, 29, 38 \} & & \{ 0, 1, 30, 34 \} & & \{ 0, 1, 32, 36 \} \\[1mm]
 & \{ 0, 1, 39, 43 \} & & \{ 0, 2, 4, 35 \} & & \{ 0, 2, 5, 29 \} & & \{ 0, 2, 6, 26 \} & & \{ 0, 2, 7, 28 \} \\[1mm]
 & \{ 0, 2, 8, 14 \} & & \{ 0, 2, 9, 36 \} & & \{ 0, 2, 10, 17 \} & & \{ 0, 2, 11, 40 \} & & \{ 0, 2, 12, 22 \} \\[1mm]
 & \{ 0, 2, 13, 31 \} & & \{ 0, 2, 15, 19 \} & & \{ 0, 2, 16, \infty \} & & \{ 0, 2, 18, 38 \} & & \{ 0, 2, 20, 24 \} \\[1mm]
 & \{ 0, 2, 25, 39 \} & & \{ 0, 2, 27, 34 \} & & \{ 0, 2, 30, 42 \} & & \{ 0, 2, 32, 37 \} & & \{ 0, 3, 6, 13 \} \\[1mm]
 & \{ 0, 3, 7, \infty \} & & \{ 0, 3, 9, 30 \} & & \{ 0, 3, 11, 32 \} & & \{ 0, 3, 12, 16 \} & & \{ 0, 3, 14, 38 \} \\[1mm]
 & \{ 0, 3, 15, 37 \} & & \{ 0, 3, 17, 20 \} & & \{ 0, 3, 18, 39 \} & & \{ 0, 3, 19, 22 \} & & \{ 0, 3, 23, 34 \} \\[1mm]
 & \{ 0, 3, 25, 40 \} & & \{ 0, 4, 9, 26 \} & & \{ 0, 4, 11, 28 \} & & \{ 0, 4, 12, 21 \} & & \{ 0, 4, 17, 31 \} \\[1mm]
 & \{ 0, 4, 18, 37 \} & & \{ 0, 4, 19, 39 \} & & \{ 0, 4, 20, 34 \} & & \{ 0, 4, 23, 38 \} & & \{ 0, 4, 29, 40 \} \\[1mm]
 & \{ 0, 5, 11, 19 \} & & \{ 0, 5, 12, 28 \} & & \{ 0, 5, 14, 36 \} & & \{ 0, 5, 17, 32 \} & & \{ 0, 5, 18, 25 \} \\[1mm]
 & \{ 0, 5, 20, 37 \} & & \{ 0, 5, 23, \infty \} & & \{ 0, 5, 24, 35 \} & & \{ 0, 5, 27, 39 \} & & \{ 0, 5, 31, 38 \} \\[1mm]
 & \{ 0, 6, 15, 38 \} & & \{ 0, 6, 16, 22 \} & & \{ 0, 6, 17, 23 \} & & \{ 0, 6, 18, 36 \} & & \{ 0, 6, 19, \infty \} \\[1mm]
 & \{ 0, 6, 20, 32 \} & & \{ 0, 7, 15, 29 \} & & \{ 0, 7, 17, 33 \} & & \{ 0, 8, 16, 32 \} & & \{ 0, 8, 19, 35 \} \\[1mm]
 & \{ 0, 8, 21, 33 \} & & \{ 0, 8, 25, \infty \} & & \{ 0, 9, 21, \infty \} & & \{ 0, 9, 22, 35 \} & & \{ 0, 15, 30, \infty \}
\end{align*}
\end{proof}

\begin{theorem}\label{rosqs92}
There exists a $\RoSQS(92)$.
\end{theorem}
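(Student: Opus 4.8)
The plan is to prove this by explicit construction, exactly in the style of the proof of Theorem~\ref{rosqs46}. Prescribe the cyclic group $G=\mathbb{Z}_{91}=\{0,\ldots,90\}$ with addition modulo $91$, acting on the point set $X=\mathbb{Z}_{91}\cup\{\infty\}$ by fixing $\infty$ and permuting $\mathbb{Z}_{91}$ in a single $91$-cycle. A $\RoSQS(92)$ is then a union of $G$-orbits of $4$-subsets of $X$, so it suffices to exhibit a suitable list of base blocks; the theorem is then certified simply by displaying them.

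To find such base blocks I would set up and solve the Kramer--Mesner system for $3$-$(92,4,1)$ designs with $G$ prescribed, using the implementation in the \emph{PAG} package described in~\cite{KKTVKW25}. Since $91=7\cdot 13$ and neither $7$ nor $13$ divides $2$, $3$, or $4$, no $3$-subset and no $4$-subset of $\mathbb{Z}_{91}$ is invariant under a nontrivial subgroup of $G$; hence every $G$-orbit on the $3$- and $4$-subsets of $X$ has full length $91$, and the system $Ax=\mathbf{1}$ is ``clean'', with $A$ the $1380\times 30705$ orbit incidence matrix (there being $\binom{92}{3}/91=1380$ orbits of triples and $\binom{92}{4}/91=30705$ orbits of quadruples). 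A $\RoSQS(92)$ corresponds to a $0/1$ solution $x$; since such a design has $\tfrac{1}{4}\binom{92}{3}=31395$ blocks, $x$ must have exactly $345$ ones. A variant that fits the extension framework of Section~\ref{sec3} is to start from a cyclic Steiner triple system $S(2,3,91)$ --- the intended derived design at $\infty$ --- and then search only for a translation-invariant family of $4$-subsets of $\mathbb{Z}_{91}$ covering the triples not already used as STS blocks.

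The only genuine obstacle is the size of the search. Relative to the $v=46$ case the number of quadruple orbits grows by roughly a factor of eight, to about $3\times 10^{4}$, and unlike $v=46$ there are no short orbits at all --- in particular no analogue of the length-$15$ coset orbit appearing there --- so there is no ``free'' partial structure to prune with. I would mitigate this in the usual ways: prescribe a larger group by adjoining a multiplier $x\mapsto mx$ of a convenient order dividing $\varphi(91)=72$ (which contracts the Kramer--Mesner system, at the mild cost of a few short orbits to track when forming $A$, and which empirically tends to produce solutions faster), and, if needed, fix a handful of base blocks before invoking the solver. Once a candidate family is produced I would verify it independently of the search code: confirm that $i\mapsto i+1\pmod{91}$, $\infty\mapsto\infty$ is an automorphism whose only fixed point is $\infty$, and check directly that every $3$-subset of $X$ lies in exactly one block. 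The proof then consists of the resulting list of $345$ base blocks.
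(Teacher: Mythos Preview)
Your plan is exactly the paper's approach: the authors prescribe a group containing $\mathbb{Z}_{91}$ acting on $\mathbb{Z}_{91}\cup\{\infty\}$ and exhibit base blocks, and in fact they take precisely the multiplier extension you propose as a mitigation, adjoining $x\mapsto 4x$ (order~$6$ in $(\mathbb{Z}/91)^{\times}$) to obtain $G\cong C_{91}\rtimes C_{6}$ of order~$546$, which cuts the list down to $75$ base blocks rather than $345$. The only thing missing from your proposal is the certificate itself---the proof \emph{is} the list of base blocks, so until the search succeeds and the blocks are written down and verified, you have a strategy rather than a proof.
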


\begin{proof}
Let $G$ be the subgroup of the symmetric group on
$\mathbb{Z}_{91}$ generated by $x \mapsto x+1$
and $x \mapsto 4x$. Then, $G$ is of order $546$ and isomorphic
to a semidirect product $C_{91}\rtimes C_6$. The union of the
$G$-orbits of the $4$-subsets listed below forms a $\RoSQS(92)$:
\begin{align*}
 & \{ 0, 1, 2, 13 \} & &  \{ 0, 1, 3, 52 \} & &  \{ 0, 1, 4, 29 \} & &  \{ 0, 1, 5, 31 \} & &  \{ 0, 1, 6, 17 \} \\[1mm]
 & \{ 0, 1, 7, 59 \} & &  \{ 0, 1, 8, 26 \} & &  \{ 0, 1, 9, 48 \} & & \{ 0, 1, 10, 66 \} & &  \{ 0, 1, 11, \infty  \} \\[1mm]
 & \{ 0, 1, 14, 39 \} & &  \{ 0, 1, 15, 27 \} & &  \{ 0, 1, 16, 43 \} & &  \{ 0, 1, 18, 76 \} & &  \{ 0, 1, 19, 58 \} \\[1mm]
 & \{ 0, 1, 20, 85 \} & &  \{ 0, 1, 21, 40 \} & &  \{ 0, 1, 22, 61 \} & & \{ 0, 1, 24, 36 \} & &  \{ 0, 1, 25, 60 \} \\[1mm]
 & \{ 0, 1, 28, 79 \} & &  \{ 0, 1, 32, 84 \} & &  \{ 0, 1, 33, 63 \} & &  \{ 0, 1, 34, 45 \} & &  \{ 0, 1, 35, 83 \} \\[1mm]
 & \{ 0, 1, 37, 55 \} & & \{ 0, 1, 38, 71 \} & &  \{ 0, 1, 41, 47 \} & &  \{ 0, 1, 44, 70 \} & &  \{ 0, 1, 46, 67 \} \\[1mm]
 & \{ 0, 1, 50, 77 \} & &  \{ 0, 1, 54, 80 \} & &  \{ 0, 1, 56, 89 \} & &  \{ 0, 1, 57, 62 \} & &  \{ 0, 1, 64, 78 \} \\[1mm]
 & \{ 0, 1, 72, 82 \} & & \{ 0, 1, 73, 81 \} & &  \{ 0, 1, 75, 86 \} & &  \{ 0, 2, 5, 71 \} & &  \{ 0, 2, 7, 44 \} \\[1mm]
 & \{ 0, 2, 8, 30 \} & &  \{ 0, 2, 9, 33 \} & &  \{ 0, 2, 10, 67 \} & &  \{ 0, 2, 11, 14 \} & & \{ 0, 2, 12, 34 \} \\[1mm]
 & \{ 0, 2, 20, 42 \} & &  \{ 0, 2, 24, 48 \} & &  \{ 0, 2, 32, 57 \} & &  \{ 0, 2, 35, 72 \} & &  \{ 0, 2, 36, 61 \} \\[1mm]
 & \{ 0, 2, 37, \infty  \} & &  \{ 0, 2, 41, 56 \} & &  \{ 0, 2, 43, 63 \} & &  \{ 0, 2, 55, 60 \} & & \{ 0, 2, 59, 81 \} \\[1mm]
 & \{ 0, 3, 9, 21 \} & &  \{ 0, 3, 18, 51 \} & &  \{ 0, 3, 31, 53 \} & &  \{ 0, 3, 36, 72 \} & &  \{ 0, 3, 43, 76 \} \\[1mm]
 & \{ 0, 3, 50, 56 \} & &  \{ 0, 3, 55, 84 \} & & \{ 0, 5, 11, 66 \} & &  \{ 0, 5, 14, 61 \} & &  \{ 0, 5, 30, 85 \} \\[1mm]
 & \{ 0, 5, 33, 77 \} & &  \{ 0, 5, 41, 82 \} & &  \{ 0, 5, 47, 65 \} & &  \{ 0, 5, 49, 70 \} & &  \{ 0, 7, 21, 63 \} \\[1mm]
 & \{ 0, 7, 28, 42 \} & &  \{ 0, 7, 36, \infty  \} & & \{ 0, 13, 26, 52 \} & &  \{ 0, 13, 65, \infty  \} & &  \{ 0, 15, 73, \infty \}
\end{align*}
\end{proof}

Steiner systems $S(3,q+1,q^n+1)$ are known as \emph{spherical
geometries} or \emph{inversive spaces}. They exist for all
$n\ge 2$ and prime powers~$q$~\cite{EW38}. The most studied
case is $n=2$, called \emph{inversive} or \emph{M\"{o}bius
planes} and denoted by $\Mob(q)$ in Table~\ref{tab1}. Classical
inversive planes are called \emph{Miquelian} because they are
characterized by Miquel's theorem~\cite[Theorem~6.1.5]{PD97}.
The only known non-Miquelian examples have orders $q=2^{2e+1}$,
$e\ge 1$, and are associated with the ovoids of Tits~\cite{JT62}.

The uniqueness of $\Mob(q)$ was proved for $q=4$ in~\cite{EW38},
for $q=5$ in~\cite{YC72} and~\cite{RHFD73a}, and for
$q=7$ in~\cite{RHFD73b}. For $q=3,5,7$, uniqueness also
follows from a theorem of Thas~\cite{JAT94} and the
uniqueness of affine planes $\AG(2,q)$. For similar results
about larger orders~$q$, see~\cite{TP22} and the references
therein. Designs $\Mob(6)$ do not exist because their derived
designs would be affine planes of order~$6$.

There are three ``sporadic'' designs in Table~\ref{tab1},
not belonging to the infinite families discussed above. The
first is the twice derived Witt $5$-design with parameters
$S(3,6,22)$, also known to be unique by~\cite{EW38}. The
second has parameters $S(3,5,26)$ and was investigated
in~\cite{GGP87}, where the question was raised whether
this design is in fact unique. In Section~\ref{sec3},
we explore the possibility of a computational approach
to this question, which remains open. The third sporadic
example is the following new $S(3,6,42)$ design.

\begin{theorem}\label{tm1}
There exists an $S(3,6,42)$ design.
\end{theorem}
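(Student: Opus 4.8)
Following the pattern of Theorems~\ref{rosqs46} and~\ref{rosqs92}, this is an existence statement that I would settle by exhibiting a prescribed automorphism group $G$ acting on a $42$-element point set together with a list of base blocks whose $G$-orbits form the design; there is little hope of a purely algebraic construction here, since the parameters are sporadic and the case has resisted earlier attempts. The computational backbone is the Kramer--Mesner method as implemented in~\cite{PAG} and described in~\cite{KKTVKW25}. Concretely: pick a candidate permutation group $G$ on the $42$-element point set; compute the $G$-orbits on the $3$-subsets and on the $6$-subsets of the point set; form the Kramer--Mesner matrix $A_{3,6}$ whose rows are indexed by the $3$-orbits, whose columns are indexed by the $6$-orbits, and whose entry in row $\mathcal{O}$, column $\mathcal{O}'$ is the number of blocks in $\mathcal{O}'$ containing a fixed triple from $\mathcal{O}$ (well defined, since it does not depend on the chosen triple). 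A $G$-invariant $S(3,6,42)$ is then exactly a $0/1$ vector $x$ with $A_{3,6}\,x = \mathbf{1}$, and I would look for such an $x$ by integer linear programming, lattice-point enumeration, or an exact-cover / Dancing-Links search.

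The crucial decision is the choice of $G$, and this is where the extension viewpoint of Section~\ref{sec3} comes in. Fixing a point $\infty$, the blocks through $\infty$ form (the extension of) an $S(2,5,41)$ on the remaining $41$ points, accounting for $82$ of the $574$ blocks, and the $492$ blocks avoiding $\infty$ are precisely $6$-subsets of those $41$ points that cover, each exactly once, the $\binom{41}{3}-82\binom{5}{3}=9840$ triples not already covered through $\infty$. So natural candidates for $G$ are: groups acting regularly on all $42$ points, taken from the GAP~\cite{GAP} small-groups library; or ``extension groups'', i.e.\ a group $H$ on the $41$ points that stabilises a chosen invariant $S(2,5,41)$ together with a prescribed action on $\infty$ --- for instance $C_{41}$ or a Frobenius group $C_{41}\rtimes C_m$ with $m\mid 40$, viewed inside $\AGL(1,41)$. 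One then fixes the $\infty$-blocks to be the extension of that $S(2,5,41)$ and runs the Kramer--Mesner search only over the $6$-subsets disjoint from $\infty$, which both shrinks the system and pins down its combinatorial structure.

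Verification, once a solution is in hand, is short and can be certified independently of the search: form the union of the $G$-orbits of the selected base blocks, and check directly that every $3$-subset of the $42$ points lies in exactly one block (equivalently, re-check $A_{3,6}\,x=\mathbf{1}$), together with the bookkeeping check that the total number of blocks is $574$.

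The main obstacle I anticipate is precisely the group selection combined with the size of the resulting search. Prescribing a large, highly structured $G$ shrinks the Kramer--Mesner system to something one can actually solve, but runs a real risk of admitting no $0/1$ solution at all; prescribing a small $G$ keeps solutions plentiful in principle but leaves an exact-cover instance far too large to finish. Since $S(3,6,42)$ is a long-standing open case, no single obvious group is guaranteed to work, so the realistic plan is to iterate the construction over many candidate extension groups (and over several choices of the underlying $S(2,5,41)$) until one of them yields a feasible, solvable instance; everything around that --- building the matrix, solving the $0/1$ system, and verifying the output --- is comparatively mechanical.
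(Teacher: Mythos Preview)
Your proposal correctly identifies the methodology --- Kramer--Mesner with a prescribed group, approached via the extension viewpoint --- but it is a plan, not a proof. For an existence theorem established by explicit construction, the proof \emph{is} the data: the paper gives a concrete permutation group $G=\langle\alpha,\beta\rangle\le S_{42}$ of order~$432$, isomorphic to $\AGL(2,3)$, together with seven base $6$-subsets whose $G$-orbits form the design. Without that explicit witness (or an equivalent one), nothing has been proved.

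More substantively, your suggested candidate groups would not have found the design. You propose extension groups inside $\AGL(1,41)$, in particular containing $C_{41}$; but the design actually constructed has full automorphism group of order $432=2^4\cdot 3^3$, so it admits no automorphism of order~$41$ at all. The extension groups that work are of orders~$24$ (isomorphic to $\SL(2,3)$, acting on the $41$ residual points in orbits of sizes $1,8,8,24$) and~$18$, coming from two particular $S(2,5,41)$ designs in Table~\ref{tab2}. The full group $\AGL(2,3)$ acts on the $42$ points in two orbits of sizes~$18$ and~$24$ --- it does not fix a point, and the paper remarks that this unusual, non-transitive action is likely why the design eluded earlier searches. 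So the missing idea is precisely the unexpected choice of group: iterating over the ``natural'' $\AGL(1,41)$-type candidates, as you outline, would exhaust a lot of computation without ever reaching the solution.
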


\begin{proof}
Let $G=\langle \alpha,\beta\rangle\le S_{42}$ be the permutation group
generated by
\begin{align*}
\alpha =\, & (1,2,4)(3,9,7)(5,6,8)(10,11,13)(12,18,16)(14,15,17)(19,38,34) \\
 & (20,39,36)(21,37,35)(22,42,32)(23,40,31)(24,41,33)(28,30,29),\\[2mm]
\beta =\, & (1,10)(2,11)(3,12)(4,16)(5,17)(6,18)(7,13)(8,14)(9,15)(19,23)\\
 & (20,24)(21,22)(25,35)(26,36)(27,34)(28,33)(29,31)(30,32)\\
 & (38,39)(41,42).
 \end{align*}
This is a group of order $432$ isomorphic to $\AGL(2,3)$.
Then the union of the $G$-orbits of the $6$-subsets listed
below is a $S(3,6,42)$:
\begin{align*}
 & \{ 1, 2, 3, 10, 11, 12 \} & & \{ 1, 2, 4, 20, 36, 39 \} & & \{ 1, 2, 13, 17, 31, 34 \}\\[1mm]
 & \{ 1, 10, 19, 22, 25, 30 \} & & \{ 1, 11, 20, 21, 29, 34 \} & & \{ 19, 20, 21, 22, 23, 24 \}\\[1mm]
 & \{ 19, 22, 31, 36, 38, 42 \}
\end{align*}
\end{proof}

The group~$G$ from the proof of Theorem~\ref{tm1} is the full automorphism
group of the design~$\D$. It partitions the point set into two orbits, one of
size~$18$ and the other of size~$24$. The unusual action of $\Aut(\D)$ might
be the reason why~$\D$ remained undiscovered for so long. Another Steiner
design constructed from a non-transitive group is the $S(5,6,36)$
from~\cite{BLW99}, where a permutation group comprising two copies
of the natural action of $\PGL(2,17)$ on~$18$ points was used.
Our new design has $\Aut(\D)\cong \AGL(2,3)$, suggesting a connection
with the affine plane of order~$3$.

Three parameter sets in Table~\ref{tab1} remain open with regards to
existence: $S(3,5,41)$, $S(3,6,46)$, and $S(3,5,50)$. In the next section,
we describe the computational approach that led to the discovery of the
$S(3,5,42)$ design and our attempts to construct designs for the
remaining open parameter sets.

\section{Extending designs with prescribed groups}\label{sec3}

Let $\E$ be a $S(t,k,v)$ design with a group~$G$ of
automorphisms fixing a point~$\infty$. Then the
derived design $\D=\der_\infty \E$ is also $G$-invariant.
In this situation, we refer to~$\E$ as an \emph{extension}
of~$\D$ and to~$G$ as an \emph{extension group}. Clearly,
$G$ must be a subgroup of both $\Aut\D$ and $\Aut\E$.
For example, a rotational $\SQS(v)$ is the extension
of a cyclic Steiner triple system $\STS(v-1)$ with
$C_{v-1}$ 
as extension group. We use the customary
abbreviation $\STS(v)$ for $S(2,3,v)$ designs, known
as Steiner triple systems.

Our strategy for constructing Steiner $3$-designs is
to extend Steiner $2$-designs with prescribed extension
groups. Typically, we first classify $S(2,k-1,v-1)$
designs~$\D$ with a fixed prescribed group~$G$. Depending on
the parameters, this is a significantly smaller computation
than classifying the corresponding $3$-designs directly.
Then, for each such design $\D$, we add $\infty$ to all
blocks of~$\D$ and then compute all extensions to a $G$-invariant
$S(3,k,v)$ design~$\E$. The $G$-invariance of~$\E$ precisely
means that the extension consists of complete $G$-orbits of
$k$-subsets that do not contain~$\infty$.
For a fixed~$\D$, this also requires less computation than
a full classification of $G$-invariant designs~$\E$, because
the blocks through~$\infty$ are already given. The $k$-subset
orbits covering any $3$-subset already covered by~$\D$ can be
discarded, reducing the size of the Kramer-Mesner system. The
extension process is a modification of the standard
Kramer-Mesner algorithm, available in the package~\cite{PAG}
as a command \texttt{ExtendSteinerDesign}.

In~\cite{MH72} and~\cite{KOP06}, $\SQS(14)$ and $\SQS(16)$
were classified by extending $\STS(13)$ and $\STS(15)$,
respectively. To get a full classification, all non-isomorphic
$\STS$ must be extended with the trivial extension group.
The $80$ non-isomorphic $\STS(15)$ were determined
in~\cite{HS55}. In~\cite{KOP06}, extending each $\STS(15)$
was set up as an instance of the exact cover problem,
and D.~E.~Knuth's \emph{dancing links} algorithm~\cite{DEK00}
was used to solve thes problems.
We repeated the calculation using the \emph{ssxcc}~\cite{DEK23}
implementation of the more recent \emph{dancing cells}
algorithm~\cite{DEK25}. The obtained systems $\SQS(16)$ need
to be checked for isomorphism, but this requires less CPU time
than the extension phase. We used the \texttt{BlockDesignFilter}
command from~\cite{PAG}, relying on \emph{nauty / Traces}~\cite{MP14}.
We got the same total number of non-isomorphic $\SQS(16)$
and distribution by full automorphism group orders
as in~\cite[Table~1]{KOP06}.

One could attempt to prove the uniqueness of the $S(3,5,26)$
design in this manner. The derived designs have parameters
$S(2,4,25)$ and were classified in~\cite{ES96}; there
are $18$ examples up to isomorphism. However, in this case
the extension phase seems infeasible, because each $S(2,4,25)$
gives rise to a very large exact cover problem.
In Knuth's terminology~\cite{DEK00}, each problem has $2100$
items and $\num{16380}$ options, out of which $210$ are to be chosen.
We couldn't solve these exact cover problems with the resources
available to us. For comparison, the $80$ exact cover
problems in the classification of $\SQS(16)$ have $420$
items and $945$ options, out of which $105$ are to be
chosen.

\begin{table}[!b]
\begin{tabular}{cc}
\hline
$\lvert\Aut(\D)\rvert$ & Nd \\
\hline
205	& 1\\
120	& 2\\
24 & 2\\
20 & 1\\
18 & 4\\
12 & 1\\
9 & 1\\
6 & 2\\
2 & 1\\
\hline
\end{tabular}
\vskip 4mm
\caption{Distribution of the known $S(2,5,41)$ designs by full automorphism group orders.}\label{tab2}
\end{table}

When a complete classification is out of reach, prescribing
a nontrivial extension group~$G$ may reduce the computation
to manageable size. An added benefit is that we have a
natural way of choosing~$G$: It must be a subgroup of~$\Aut(\D)$.
For example, $15$ non-isomorphic $S(2,5,41)$ designs are known,
with full automorphism group orders given in Table~\ref{tab2}.
Among them are all designs with automorphisms of order~$3$,
classified in~\cite{VK02}. The known $S(2,5,41)$ designs are
included in the Zenodo data set~\cite{Zenodo}. Two of
the designs, with $\lvert\Aut(\D)\rvert=24$ and $\lvert\Aut(\D)\rvert=18$, extend
to a $S(3,6,42)$ design~$\E$. For both designs~$\D$, the full
automorphism group can be used as extension group. The group
of order $24$ is large enough to construct the $S(3,6,42)$
design~$\E$ even without prescribing the derived subdesign,
but it would be difficult to guess its action on~$42$ points
without the knowledge of the derived design.
The group is isomorphic to $\SL(2,3)$ and acts in five orbits
of sizes $1$, $1$, $8$, $8$, and $24$. Instead of guessing,
we get the extension groups from the prescribed derived
subdesigns~$\D$. We examined all known $S(2,5,41)$ designs
with $G\le \Aut(\D)$, $\lvert G\vert\ge 10$ as extension groups, and
found only one $S(3,6,42)$ design up to isomorphism, given
in Theorem~\ref{tm1}.

We tried to construct designs for the remaining three open
parameter sets from Table~\ref{tab1} by following the same
procedure, but no designs were found. Derived designs of
a hypothetical $S(3,6,46)$ have parameters $S(2,5,45)$.
Previously, $65$ non-isomorphic $S(2,5,45)$ designs were
known~\cite{CZ22}; see also~\cite{CDDRS19} and references
therein. We managed to construct over $1000$ new examples
by prescribing various groups of orders~$16$ and~$8$, and
by applying so-called paramodifications~\cite{MN21}.

\begin{theorem}\label{tm2-5-45}
There exist at least $1072$ non-isomorphic $S(2,5,45)$ designs.
Their distribution by orders of full automorphism groups is
given in Table~\ref{tab3}.
\end{theorem}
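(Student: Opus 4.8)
This statement is a computational lower bound together with an enumeration by automorphism group order, so the proof will consist of exhibiting an explicit list of at least $1072$ pairwise non-isomorphic $S(2,5,45)$ designs and reading off the distribution displayed in Table~\ref{tab3}. The plan is to (i) start from the $65$ designs already available in the literature \cite{CZ22,CDDRS19}, (ii) produce many more by the prescribed-automorphism-group Kramer--Mesner method and by paramodifications, and (iii) remove isomorphic copies with \emph{nauty / Traces}, recording $\lvert\Aut(\D)\rvert$ for each survivor. That every retained incidence structure is genuinely an $S(2,5,45)$ design — i.e.\ that each pair of points lies in exactly one block — is a cheap direct check performed at the end.

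For the production step (ii), I would pick candidate groups $G$ of order $16$ and of order $8$, taken both as subgroups of $\Aut(\D)$ for the known designs $\D$ and as permutation representations on $45$ points drawn from the GAP small-group library, and for each such $G$ set up and solve the Kramer--Mesner system describing the $G$-invariant $S(2,5,45)$ designs, exactly as in \cite{KKTVKW25} using the implementation in the package \cite{PAG}. Every solution of such a system is a new candidate design. To enlarge the pool further I would then apply the paramodification operations of \cite{MN21} to the designs obtained so far, iterating this until no new isomorphism types turn up within the available time and memory budget.

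Step (iii) is the verification: feed every constructed design through the \texttt{BlockDesignFilter} command of \cite{PAG}, which both discards duplicates up to isomorphism and returns the full automorphism group of each retained design; the tally of survivors sorted by $\lvert\Aut(\D)\rvert$ is precisely Table~\ref{tab3}, and its entries sum to $1072$. The main obstacle is calibrating the prescribed groups: a group that is too large yields few or no $S(2,5,45)$ designs, whereas a group that is too small makes the Kramer--Mesner system intractable, so finding orders (here $16$ and $8$) and actions that are simultaneously solvable and prolific is the delicate part. A secondary difficulty is that paramodifications come with no a priori guarantee of producing non-isomorphic output, so the final count can only be certified a posteriori by the isomorphism filter rather than predicted in advance; likewise the claim is a lower bound precisely because the true number of $S(2,5,45)$ designs is far out of reach of a complete classification.
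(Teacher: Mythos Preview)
Your proposal is correct and matches the paper's approach essentially verbatim: the authors also start from the $65$ previously known $S(2,5,45)$ designs~\cite{CZ22,CDDRS19}, construct new ones by the Kramer--Mesner method with prescribed groups of orders~$16$ and~$8$ and by paramodifications~\cite{MN21}, filter isomorphic copies with \emph{nauty / Traces}, and deposit the resulting list in~\cite{Zenodo}. There is nothing to add.
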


\begin{table}[t]
\begin{tabular}{cc}
\hline
$\lvert\Aut(\D)\rvert$ & Nd \\
\hline
360 & 1 \\
160 & 1 \\
72 & 6 \\
40 & 1 \\
32 & 3 \\
24 & 3 \\
16 & 16 \\
8 & 142 \\
6 & 2 \\
4 & 221 \\
2 & 648 \\
1 & 28 \\
\hline
\end{tabular}
\vskip 4mm
\caption{Distribution of the known $S(2,5,45)$ designs by full automorphism group orders.}\label{tab3}
\end{table}

\begin{table}[!b]
\begin{tabular}{cc@{\rule{10mm}{0mm}}cc@{\rule{10mm}{0mm}}cc}
\hline
$\lvert\Aut(\D)\rvert$ & Nd & $\lvert\Aut(\D)\rvert$ & Nd & $\lvert\Aut(\D)\rvert$ & Nd\\
\hline
\num{12130560} & 1 & 486 & 13 & 96 & 22 \\
\num{151632} & 1 & 480 & 1 & 81 & 43 \\
\num{15552} & 1 & 468 & 1 & 80 & 3 \\
\num{10368} & 1 & 432 & 13 & 78 & 13 \\
\num{7776} & 1 & 384 & 1 & 72 & 467 \\
\num{5184} & 4 & 324 & 153 & 64 & 1 \\
\num{3888} & 5 & 320 & 1 & 60 & 1 \\
\num{2916} & 1 & 288 & 12 & 54 & 11 \\
\num{2592} & 5 & 256 & 3 & 48 & 163 \\
\num{2106} & 3 & 216 & 21 & 40 & 25 \\
\num{1944} & 2 & 192 & 3 & 39 & 194 \\
\num{1458} & 1 & 162 & 19 & 36 & \num{5346} \\
\num{1296} & 18 & 160 & 3 & 32 & 29 \\
972 & 11 & 156 & 3 & 27 & 6 \\
864 & 2 & 144 & 82 & 26 & 176 \\
648 & 57 & 120 & 1 & 24 & \num{2782} \\
576 & 2 & 108 & 169 & 20 & 894 \\
\hline
\end{tabular}
\vskip 4mm
\caption{Distribution of the known $S(2,4,40)$ designs by full automorphism group orders.}\label{tab4}
\end{table}

These designs, as well as the ones from Theorems~\ref{tm2-4-40} and~\ref{tm2-4-49},
are also available in~\cite{Zenodo}. We systematically ran the extension algorithm
with the known examples and extension groups of orders $\lvert G\rvert>10$,
and even with most groups of orders $8$ and $10$ barring
a few hard cases, but we didn't get any $S(3,6,46)$ designs.
The search would take too long for smaller extension groups.

Derived designs of $S(3,5,41)$ have parameters $S(2,4,40)$.
The classical example is $\PG(3,3)$; we managed to construct
many more examples by prescribing various groups of automorphisms
and using the Kramer-Mesner method.

\begin{theorem}\label{tm2-4-40}
There exist at least $\num{10791}$ non-isomorphic $S(2,4,40)$ designs.
Their distribution by orders of full automorphism groups is
given in Table~\ref{tab4}.
\end{theorem}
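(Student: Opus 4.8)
The plan is to establish the lower bound by explicit construction, using the Kramer--Mesner method with prescribed automorphism groups exactly as described in Section~\ref{sec3}, and then to certify the count and the automorphism-group distribution by isomorphism testing. Since the statement only asserts a lower bound, it suffices to exhibit a concrete list of $S(2,4,40)$ designs, verify that each is indeed a Steiner system (every pair of points lies in exactly one block), and verify that no two of them are isomorphic; the search procedure used to produce the list need not be exhaustive, which makes the resulting bound robust and independently re-checkable.

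First I would assemble a pool of candidate permutation groups $G$ on a $40$-element point set. One natural source is the classical design $\PG(3,3)$, whose full automorphism group $\PGL(4,3)$ has order $\num{12130560}$: every subgroup of it is a legitimate prescribed group, and running Kramer--Mesner with such a $G$ typically produces, besides $\PG(3,3)$ itself, many further $G$-invariant designs. A second source is the GAP small-groups library: for groups of small order admitting a transitive (or a convenient intransitive) action on $40$ points one takes a suitable regular or imprimitive representation. For each chosen $G$, I would form the Kramer--Mesner system whose rows are indexed by the $G$-orbits on $2$-subsets of points and whose columns by the $G$-orbits on $4$-subsets, the $(O_2,O_4)$ entry counting the pairs from $O_2$ contained in a fixed representative of $O_4$; a $0/1$ vector covering every pair-orbit exactly once yields a $G$-invariant $S(2,4,40)$ design. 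These systems are solved with the exact-cover / ILP machinery in the \texttt{PAG} package~\cite{PAG}.

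After collecting all solutions over all chosen groups, I would pool them together with the previously known examples, run \emph{nauty / Traces}~\cite{MP14} through the \texttt{BlockDesignFilter} routine to discard isomorphic duplicates, and compute the order of the full automorphism group of each surviving design. Tallying these orders gives Table~\ref{tab4}, and the grand total is the claimed number $\num{10791}$. All of these designs are deposited in~\cite{Zenodo}, so the lower bound and the distribution can be reverified directly.

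The main obstacle is purely computational: for $S(2,4,40)$ the number of $G$-orbits on $4$-subsets, and hence the number of columns of the Kramer--Mesner system, grows rapidly as $|G|$ shrinks, while the number of $G$-invariant designs is astronomically large for small $G$; enumerating all $S(2,4,40)$ designs is therefore hopeless, and the craft lies in choosing groups large enough to keep each system solvable yet varied enough to harvest a large and diverse family of non-isomorphic designs (the presence of full automorphism groups as small as order $20$ in Table~\ref{tab4} shows that fairly small prescribed groups were still within reach). A secondary difficulty is the isomorphism-rejection step: with more than ten thousand candidates the canonical-form computations must be organized carefully, but this phase is considerably cheaper than the search itself.
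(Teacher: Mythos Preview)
Your proposal is correct and matches the paper's own approach essentially point for point: construct $S(2,4,40)$ designs via the Kramer--Mesner method with various prescribed automorphism groups (including subgroups of $\Aut(\PG(3,3))$), filter isomorphs and compute full automorphism groups with \emph{nauty/Traces}, and certify the lower bound by depositing the explicit list in~\cite{Zenodo}. The paper offers no further argument beyond this computational description, so there is nothing to add.
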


While trying to extend these designs to $S(3,5,41)$,
we could eli\-mi\-nate all extension groups of orders greater
than~$40$. Some groups of order~$40$ require a lot of
computation and we couldn't examine them exhaustively.
This is also true for groups of orders less than~$40$.
To construct $S(3,5,50)$ designs by the same method,
$S(2,4,49)$ designs are required. Again, we first
applied the usual Kramer-Mesner approach to the
parameters $S(2,4,49)$ with various prescribed
groups.

\begin{theorem}\label{tm2-4-49}
There exist at least $\num{22080}$ non-isomorphic $S(2,4,49)$ designs.
Their distribution by orders of full automorphism groups is
given in Table~\ref{tab5}.
\end{theorem}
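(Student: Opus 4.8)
The plan is to establish the bound computationally with the Kramer--Mesner method~\cite{KM76, KKTVKW25}, exactly as in the proofs of Theorems~\ref{tm2-5-45} and~\ref{tm2-4-40}. An $S(2,4,49)$ design has $b=196$ blocks and replication number $r=16$. First I would assemble a supply of candidate automorphism groups~$G$ acting on the $49$ points: subgroups of the automorphism groups of any $S(2,4,49)$ designs already known, together with a spread of abstract groups of small and moderate order realised as permutation groups via the GAP~\cite{GAP} small-groups library --- in particular, following the recipe of Section~\ref{sec2}, extensions of regular representations of groups of orders $49$ and $48$. For each~$G$ one forms the Kramer--Mesner system: a $0/1$ matrix whose rows are indexed by the $G$-orbits on $2$-subsets of the point set and whose columns are indexed by the $G$-orbits on $4$-subsets, and one searches for $0/1$ solution vectors selecting a set of $4$-orbits that covers every $2$-orbit exactly once. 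These systems are solved with the solver shipped in~\cite{PAG}, and for the more demanding instances with the dancing-cells code~\cite{DEK23, DEK25}; every solution yields a $G$-invariant $S(2,4,49)$ design.

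Next, each design obtained in this way is reduced to canonical form with \emph{nauty / Traces}~\cite{MP14} through the \texttt{BlockDesignFilter} command of~\cite{PAG}, which removes isomorphic copies and, in particular, the previously known examples. For every surviving design the full automorphism group is computed with the same tool, the designs are sorted according to $\lvert\Aut(\D)\rvert$, and the resulting frequency table is recorded as Table~\ref{tab5}. Summing the entries of that table gives the claimed figure of $\num{22080}$ pairwise non-isomorphic $S(2,4,49)$ designs.

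The principal difficulty is the sheer combinatorial size of the search rather than any one subtlety: as the order of~$G$ decreases the Kramer--Mesner matrices grow and the number of solutions explodes, so~$G$ must be chosen carefully to keep each instance within reach while still yielding a large family of non-isomorphic designs, and --- just as for $S(2,4,40)$ --- a few instances with the smallest admissible group orders have to be left unfinished. A second genuine cost is the isomorphism phase: tens of thousands of designs, many of them with trivial or very small automorphism groups, must be canonically labelled, which is precisely the regime in which \emph{nauty / Traces} is slowest, and one must also respect the memory budget for archiving the designs in~\cite{Zenodo}. For these reasons the statement is phrased as a lower bound: it records what the prescribed groups we were able to process produced, not a classification.
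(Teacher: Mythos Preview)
Your proposal is correct and matches the paper's own approach: the theorem is established computationally by the Kramer--Mesner method with various prescribed automorphism groups, followed by isomorphism rejection and full-automorphism-group computation via \emph{nauty / Traces}, exactly as you describe. The paper's proof is in fact just the single sentence preceding the theorem (``Again, we first applied the usual Kramer--Mesner approach to the parameters $S(2,4,49)$ with various prescribed groups''), so your write-up is, if anything, more detailed than what appears in the article.
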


\begin{table}[!h]
\begin{tabular}{cc}
\hline
$\lvert\Aut(\D)\rvert$ & Nd \\
\hline
147 & 14 \\
144 & 2 \\
72 & 2 \\
49 & 222 \\
48 & 332 \\
45 & 288 \\
42 & 479 \\
36 & 1672 \\
30 & 1498 \\
27 & 7432 \\
20 & 9879 \\
14 & 260 \\
\hline
\end{tabular}
\vskip 4mm
\caption{Distribution of the known $S(2,4,49)$ designs by full automorphism group orders.}\label{tab5}
\end{table}

The cut-off order for the extension group is now~$72$.
We could eliminate all extensions of the designs from
Theorem~\ref{tm2-4-49} with $\lvert G\rvert\ge 72$, but as one can
see from Table~\ref{tab5}, these groups act on only $18$
non-isomorphic $S(2,4,49)$ designs. There is no classical
example with these parameters, and we did not find many
designs~$\D$ with large enough $\Aut(\D)$ so that the
extension process is computationally feasible.

\section{Extending to higher strengths}\label{sec4}

In Table~\ref{tab1}, the sixth column labeled
`$t_{\max}\le$' contains the largest strength~$t$ for
which a $S(3,k,v)$ design could possibly be extended
to an $S(t,k',v')$, according to the current knowledge of
nonexistence results. Most entries follow
from the divisibility conditions~\eqref{divcond} on $S(t,k',v')$,
with three exceptions. In~\cite{MH72} and in~\cite{OP08},
it was proved by exhaustive computer searches that $S(4,5,15)$
and $S(4,5,17)$ designs do not exist.
Designs $S(4,6,18)$ were ruled out in~\cite[Satz~6]{EW38}
based on a case-by-case analysis. Hence, for $S(3,4,14)$,
$S(3,4,16)$, and $S(3,5,17)$, the bound in the sixth
column is $t_{\max}\le 3$. The fifth column contains
strengths of the largest known extensions. As it happens,
all these (nontrivial) extensions are $5$-designs, with
parameters $S(5,6,12)$, $S(5,6,24)$, $S(5,8,24)$, $S(5,7,28)$,
$S(5,6,36)$, and $S(5,6,48)$. Known designs with these
parameters are also included in~\cite{Zenodo}. Not a single
Steiner $4$-design was discovered directly; all known
examples are actually derived $5$-designs.

We remark that the existence table in \cite[Section~5.3]{CM07}
is organized differently: Admissible parameters $S(t,k,v)$
are grouped according to $k-t$ and $n=v-t$. There, all
entries in the column labeled `$t\le$' follow from
necessary conditions on the parameters, while (non)existence
results are given in separate columns labeled `$\exists$'
and `$\nexists$'. The table was published in 2007, but
only two entries need to be updated with new results. The
first is the nonexistence of $S(4,5,17)$ from~\cite{OP08},
and the second is the new $S(3,6,42)$ design from
Theorem~\ref{tm1}.

In view of the remark about Steiner $4$-designs above,
it would be noteworthy to extend some of the $3$-designs
from Table~\ref{tab1}. The procedure of extending designs
with prescribed groups, used in the previous section
for $t=3$, can be used without change for higher strengths.
In the first place, we searched for extensions of the
new $S(3,6,42)$ design~$\D$ using subgroups $G\le \Aut(\D)$
as extension groups. We eliminated all nontrivial subgroups,
thereby proving the following theorem.

\begin{theorem}\label{tm4-7-43}
If the $S(3,6,42)$ design of Theorem~\ref{tm1}
can be extended to a $S(4,7,43)$ design~$\E$ by
adding a point~$\infty$, then the point stabilizer
$\Aut(\E)_\infty$ is trivial.
\end{theorem}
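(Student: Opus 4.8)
Throughout, $\D$ is the $S(3,6,42)$ design of Theorem~\ref{tm1}, $X$ its $42$-point set, and we recall that $\Aut(\D)\cong\AGL(2,3)$ has order $432=2^4\cdot 3^3$. The plan is to reduce the statement to a finite set of Kramer--Mesner feasibility computations of the kind described in Section~\ref{sec3}. First, observe that any automorphism $\sigma$ of $\E$ with $\sigma(\infty)=\infty$ permutes the blocks of $\E$ through $\infty$; these are exactly the sets $B\cup\{\infty\}$ with $B$ a block of the derived design $\der_\infty\E=\D$, so $\sigma$ restricts to an automorphism of $\D$, and since it fixes $\infty$ it is determined by this restriction. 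Hence $\Aut(\E)_\infty$ embeds into $\Aut(\D)$. If $\Aut(\E)_\infty$ were nontrivial it would contain a subgroup $H$ of order $2$ or $3$, which is then a subgroup of $\Aut(\D)$. As $\Aut(\D)$ acts on $\D$, every $g\in\Aut(\D)$ extends to a permutation of $X\cup\{\infty\}$ fixing $\infty$, and this permutation carries any $S(4,7,43)$ extension of $\D$ to an isomorphic one (the derived design at $\infty$ is preserved because $g\in\Aut(\D)$); conjugating by such a $g$ replaces $H$ by an arbitrary $\Aut(\D)$-conjugate. It therefore suffices to show, for one representative $H$ from each $\Aut(\D)$-conjugacy class of subgroups of order $2$ or $3$ --- only a handful, easily enumerated with GAP~\cite{GAP} --- that $\D$ admits no $S(4,7,43)$ extension invariant under $H$.

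For a fixed such $H$, I would phrase the search as an extension problem, exactly as in Section~\ref{sec3}. In any extension $\E$ the blocks through $\infty$ are forced to be $\{B\cup\{\infty\}\mid B\in\B\}$, and these already cover every $4$-subset of $X$ that lies in a block of $\D$. Hence the remaining blocks form a family $\mathcal{C}$ of $7$-subsets of $X$ that (i) meet every block of $\D$ in at most $3$ points --- equivalently, contain no $4$-subset lying in a block of $\D$ --- and (ii) cover every $4$-subset of $X$ not contained in a block of $\D$ exactly once. Because $H\le\Aut(\E)$ fixes $\infty$, the family $\mathcal{C}$ is $H$-invariant, so it is a union of $H$-orbits of such admissible $7$-subsets. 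I would build the Kramer--Mesner matrix whose rows are the $H$-orbits on the not-yet-covered $4$-subsets of $X$, whose columns are the $H$-orbits on the admissible $7$-subsets, with incidence entries, and search for a $0/1$ column selection making every row sum equal to $1$, using the \texttt{ExtendSteinerDesign} routine of~\cite{PAG}. If all of these systems are infeasible, the theorem follows by the reduction above.

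The main obstacle will be the size of the systems for the smallest subgroups, $|H|=2$ and $|H|=3$. There are $\binom{42}{4}-574\binom{6}{4}$ four-subsets of $X$ to cover, and, before filtering, on the order of $\binom{42}{7}$ candidate $7$-subsets, so even after dividing by $|H|$ the instances are large. What makes them tractable is condition~(i): bounded intersection with all $574$ blocks of $\D$ prunes the set of admissible $7$-subsets drastically, and the resulting exact-cover instances are within reach of the solvers used elsewhere in this paper (see Section~\ref{sec3}). The orders $2$ and $3$ are the costly cases; every larger subgroup yields a much smaller system. Once all representatives have been eliminated, the conjugacy argument from the first paragraph completes the proof.
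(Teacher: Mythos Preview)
Your proposal is correct and follows essentially the same approach as the paper: both reduce the claim to checking that no $S(4,7,43)$ extension of~$\D$ is invariant under a nontrivial subgroup of $\Aut(\D)$, and both settle this by running the Kramer--Mesner extension search of Section~\ref{sec3} with these subgroups as prescribed extension groups. Your write-up is more explicit than the paper's one-line account, spelling out the embedding $\Aut(\E)_\infty\hookrightarrow\Aut(\D)$, the reduction to subgroups of prime order $2$ or $3$, and the further reduction to $\Aut(\D)$-conjugacy class representatives; these are standard and valid optimizations of the same computation.
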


Eliminating the trivial extension group would require
solving an exact cover problem with $\num{103320}$ items and
$\num{288504}$ options, out of which $2952$ are to be chosen.
We could not perform this computation, but we did manage
to eliminate the trivial extension group for the
following Steiner $5$-design.

\begin{theorem}\label{tm6-8-29}
The Steiner system $S(5,7,28)$ constructed in~\cite{RHFD76}
does not extend to a Steiner $6$-design.
\end{theorem}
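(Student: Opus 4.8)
The plan is to argue that extending the given $S(5,7,28)$ design $\D$ to a Steiner $6$-design amounts to completing it to an $S(6,8,29)$ design $\E$ with $\der_\infty\E=\D$, and then to eliminate this possibility by the extension procedure of Section~\ref{sec3}. First I would pin down the structure of such an $\E$. Its blocks fall into two classes: those through the new point $\infty$, which are forced to be exactly $\{B\cup\{\infty\}\mid B\in\B\}$ (this is just the statement that the derived design at $\infty$ is $\D$), and those avoiding $\infty$, which are $8$-subsets of the $28$-point set $X$. A $6$-subset of $X$ contained in some block of $\D$ is already covered by the corresponding block through $\infty$, so it must lie in no further block; every other $6$-subset of $X$ must lie in exactly one of the chosen $8$-subsets. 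Thus the blocks avoiding $\infty$ are the solution of an exact cover problem: the items are the $6$-subsets of $X$ not contained in any block of $\D$ (there are $\binom{28}{6}-4680\cdot 7=343980$ of them), and the admissible options are the $8$-subsets $C\subseteq X$ with $|C\cap B|\le 5$ for every $B\in\B$ — the only ones that can occur, since two blocks of a Steiner $6$-design meet in at most $5$ points. This is precisely the system built by \texttt{ExtendSteinerDesign} from~\cite{PAG} with the trivial extension group.

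Next I would note that the trivial extension group is the only one that actually needs to be treated: a $G$-invariant completion for any $G\le\Aut(\D)$ is in particular a completion, so infeasibility of the exact cover above already proves that $\E$ does not exist. (In practice one still first runs the algorithm for nontrivial subgroups $G$, using conjugacy-class representatives of the known full automorphism group of $\D$, since those systems are far smaller and are disposed of quickly; this is the route taken in Section~\ref{sec3}.) The decisive step is therefore to feed the trivial-group exact cover to a dancing-cells solver, as was done for the enumeration of $\SQS(16)$ in Section~\ref{sec3}, and to obtain the answer that it has no solution. Standard preprocessing helps: discard every $8$-subset that contains a $6$-subset already covered by $\D$, then repeatedly force or exclude options coming from items that lie in a single remaining $8$-subset.

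The main obstacle is the sheer size of the trivial-group problem and hence the time needed to certify unsatisfiability: there are about $3.4\times10^5$ items, on the order of $\binom{28}{8}\approx 3\times10^6$ candidate $8$-subsets before pruning, and $16965-4680=12285$ of them to be selected in a solution. Unlike the analogous $S(4,7,43)$ instance of Theorem~\ref{tm4-7-43}, which we could not resolve, this one is within reach; should it not have been, one would look for further structural cuts, for instance by also analysing the doubly derived configuration at a second fixed point. Once the solver reports that no exact cover exists, the theorem follows.
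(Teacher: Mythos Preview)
Your proposal is correct and matches the paper's approach exactly: reduce the extension question to the exact cover instance for the trivial extension group and verify computationally that it has no solution. The paper reports the same $343980$ uncovered $6$-subsets and the same $12285$ blocks to be chosen, and additionally records that only $39312$ admissible $8$-subsets survive the pruning you describe (the paper's sentence appears to have the words ``items'' and ``options'' transposed relative to its other examples).
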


Here, an exact cover problem with $\num{343980}$ options and
$\num{39312}$ items, out of which $\num{12285}$ are to be
chosen, was computationally proved to have no solutions.
However, this result does not improve the `$t_{\max}$$\le$'
entry for $S(3,5,26)$ designs, because designs with these
parameters are not known to be unique. Potentially, some
other yet-undiscovered $S(3,5,26)$ design might admit an extension
all the way to a Steiner $8$-design.

The smallest parameters of a Steiner $3$-design for which
the extension problem is open are $S(3,4,20)$. We ran a
lot of extension procedures using extension groups of
orders $\lvert G\rvert\ge 10$, but did not find any
$S(4,5,21)$ design.
Our attempts to find other unknown extensions of the
designs from Table~\ref{tab1} were also unsuccessful.
Unfortunately, these calculations provide very little
evidence that the sought-after Steiner $4$-designs do
not exist. We shall illustrate this with $S(3,4,22)$
designs, for which a few extensions are known.

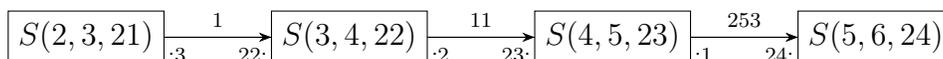
\begin{figure}[!h]
\begin{center}
\begin{tikzpicture}[
    every edge quotes/.style = {auto=right, font=\tiny,
      text=black}]
\GraphInit[vstyle=Empty]
\tikzset{VertexStyle/.style = {shape = rectangle}}
\Vertex[x=0,y=0,L={$S(2,3,21)$},style=draw]{dddS}
\Vertex[x=3.5,y=0,L={$S(3,4,22)$},style=draw]{ddS}
\Vertex[x=7,y=0,L={$S(4,5,23)$},style=draw]{dS}
\Vertex[x=10.5,y=0,L={$S(5,6,24)$},style=draw]{S}
\draw (dddS) edge[-{Stealth}] node[font=\tiny,above]{1}  (ddS);
\draw (dddS) edge[":3\kern 7mm 22:"] (ddS);
\draw (ddS) edge[-{Stealth}] node[font=\tiny,above]{11} (dS);
\draw (ddS) edge[":2\kern 7mm 23:"] (dS);
\draw (dS) edge[-{Stealth}] node[font=\tiny,above]{253} (S);
\draw (dS) edge[":1\kern 7mm 24:"] (S);
\end{tikzpicture}
\end{center}
\caption{Derived designs of the three known $S(5,6,24)$ designs.}\label{fig2}
\end{figure}

Denniston~\cite{RHFD76} established the existence of
$S(5,6,24)$ designs by prescribing $\PSL(2,23)$ as
a group of automorphisms. It is known that there are
exactly three non-isomorphic designs under these
assumptions. Figure~\ref{fig2} describes the derived
designs of these three $5$-designs. The order of the
largest possible extension group~$G$ is written
above the arrow, and below are the indices $[\Aut \D:G]$
and $[\Aut \E:G]$. Thus, there are three non-isomorphic
$S(3,4,22)$ designs which we know to be extensible
to $S(4,5,23)$ designs by an extension group of
order~$11$. However, finding these extensions
computationally amounts to exact cover problems
requiring too much CPU time to be solved exhaustively.
Each problem has $630$ items and $1758$ options, out of
which $126$ are to be chosen. Worse still, there
are exactly $\num{26035}$ non-isomorphic $S(3,4,22)$ designs
with an automorphism of order~$11$. If we go from
smaller to larger~$t$, we have no easy way to
distinguish the three extensible designs in the
list of $\num{26035}$ systems $S(3,4,22)$ with a group
of order~$11$. In this case, it is much easier to
construct the $S(4,5,23)$ and $S(5,6,24)$ designs
directly, because larger prescribed groups of
orders $253$ and $6072$ can be used.

To conclude, we believe that we have mostly exhausted
the useful applications of extending Steiner designs
with prescribed groups, as outlined in Sections~\ref{sec3}
and~\ref{sec4}. To make further progress, additional restrictions
derived from new theoretical insights will be needed,
or alternative computational techniques -- such as the
method of tactical decompositions~\cite{KW23, KNP11} --
will have to be implemented and applied.


\begin{thebibliography}{99}

\bibitem{JAB08}
J.~A.~Barrau, \emph{On the combinatory problem of Steiner},
K.\ Akad.\ Wet.\ Amsterdam Proc.\ Sect.\ Sci.\ \textbf{11}
(1908), 352--360.

\bibitem{BJL99}
T.~Beth, D.~Jungnickel, H.~Lenz,
\emph{Design theory, Volume 1, Second edition},
Cambridge University Press, Cambridge, 1999.
\url{https://doi.org/10.1017/CBO9780511549533}

\bibitem{BLW99}
A.~Betten, R.~Laue, A.~Wassermann,
\emph{A Steiner $5$-design on $36$ points},
Des.\ Codes Cryptogr.\ \textbf{17} (1999), 181--186.
\url{https://doi.org/10.1023/A:1026427226213}

\bibitem{YC72}
Y.~Chen, \emph{The Steiner system $S(3,6,26)$},
J.\ Geom.\ \textbf{2} (1972), 7--28.
\url{https://doi.org/10.1007/BF02148135}

\bibitem{CD07}
C.~J.~Colbourn, J.~H.~Dinitz (eds.),
\emph{Handbook of combinatorial designs, Second edition},
Chapman \& Hall/CRC, Boca Raton, FL, 2007.
\url{https://doi.org/10.1201/9781420010541}

\bibitem{CM07}
C.~J.~Colbourn, R.~Mathon, \emph{Steiner systems},
in: Handbook of Combinatorial Designs, Second Edition
(eds.\ C.~J.~Colbourn and J.~H.~Dinitz), Chapman \&
Hall/CRC, Boca Raton, FL, 2007, 102--110.

\bibitem{CDDRS19}
D.~Crnkovi\'{c}, D.~Dumi\v{c}i\'{c} Danilovi\'{c}, S.~Rukavina, M.~\v{S}imac,
\emph{On some new Steiner $2$-designs $S(2,5,45)$},
Util.\ Math.\ \textbf{111} (2019), 281--308.

\bibitem{CZ22}
D.~Crnkovi\'{c}, T.~Zrinski, \emph{Constructing block designs
with a prescribed automorphism group using genetic algorithm},
J.\ Combin.\ Des.\ \textbf{30} (2022), no.\ 7, 515--526.
\url{https://doi.org/10.1002/jcd.21838}

\bibitem{PD97}
P.~Dembowski, \emph{Finite geometries.
Reprint of the 1968 edition},
Springer-Verlag, Berlin, 1997.
\url{https://doi.org/10.1007/978-3-642-62012-6}

\bibitem{RHFD73a}
R.~H.~F.~Denniston,
\emph{Uniqueness of the inverse plane of order $5$},
Manuscripta Math.\ \textbf{8} (1973), 11--19.
\url{https://doi.org/10.1007/BF01317572}

\bibitem{RHFD73b}
R.~H.~F.~Denniston,
\emph{Uniqueness of the inversive plane of order $7$},
Manuscripta Math.\ \textbf{8} (1973), 21--26.
\url{https://doi.org/10.1007/BF01317573}

\bibitem{RHFD76}
R.~H.~F.~Denniston,
\emph{Some new $5$-designs},
Bull.\ London Math.\ Soc.\ \textbf{8} (1976),
no.\ 3, 263--267.
\url{https://doi.org/10.1112/blms/8.3.263}

\bibitem{DV71}
J.~Doyen, M.~Vandensavel, \emph{Non isomorphic Steiner quadruple systems},
Bull.\ Soc.\ Math.\ Belg.\ \textbf{23} (1971), 393--410.

\bibitem{DR80}
J.~Doyen, A.~Rosa, \emph{An updated bibliography and survey of Steiner systems},
Ann.\ Discrete Math.\ \textbf{7} (1980), 317--349.

\bibitem{FC11}
T.~Feng, Y.~Chang, \emph{Constructions for cyclic $3$-designs and improved results
on cyclic Steiner quadruple systems}, J.\ Combin.\ Des.\ \textbf{19} (2011),
no.\ 3, 178--201.
\url{https://doi.org/10.1002/jcd.20264}

\bibitem{GAP}
The GAP~Group,
\emph{GAP -- Groups, Algorithms, and Programming},
Version 4.14.0, 2024. \url{https://www.gap-system.org} (Accessed 25 September 2025)

\bibitem{GKLO23}
S.~Glock, D.~K\"{u}hn, A.~Lo, D.~Osthus,
\emph{The existence of designs via iterative absorption: hypergraph $F$-designs for arbitrary $F$},
Mem.\ Amer.\ Math.\ Soc.\ \textbf{284} (2023), no.\ 1406, v+131 pp.
\url{https://doi.org/10.1090/memo/1406}

\bibitem{GGM92}
M.~J.~Grannell, T.~S.~Griggs, R.~A.~Mathon,
\emph{On Steiner systems $S(5,6,48)$},
J.\ Combin.\ Math.\ Combin.\ Comput.\ \textbf{12}
(1992), 77--96.
\url{https://combinatorialpress.com/article/jcmcc/Volume 12/vol-12-paper 8.pdf}

\bibitem{GGP87}
M.~J.~Grannell, T.~S.~Griggs, J.~S.~Phelan,
\emph{On Steiner systems $S(3,5,26)$},
in: Combinatorial design theory (eds.\ C.~J.~Colbourn and
R.~Mathon), North-Holland Publishing Co., Amsterdam, 1987,
197--206. \url{https://doi.org/10.1016/S0304-0208(08)72887-1}

\bibitem{HS55}
M.~Hall, Jr., J.~D.~Swift, \emph{Determination of Steiner
triple systems of order $15$}, Math.\ Tables Aids Comput.\
\textbf{9} (1955), 146--152. \url{https://doi.org/10.2307/2002050}

\bibitem{HH60}
H.~Hanani, \emph{On quadruple systems},
Canadian J.\ Math.\ \textbf{12} (1960), 145--157.
\url{https://doi.org/10.4153/CJM-1960-013-3}

\bibitem{AH82}
A.~Hartman, \emph{A general recursive construction for quadruple systems},
J.\ Combin.\ Theory Ser.\ A \textbf{33} (1982), no.\ 2, 121--134.
\url{https://doi.org/10.1016/0097-3165(82)90001-2}

\bibitem{AH94}
A.~Hartman, \emph{The fundamental construction for $3$-designs},
Discrete Math.\ \textbf{124} (1994), no.\ 1-3, 107--132.
\url{https://doi.org/10.1016/0012-365X(92)00055-V}

\bibitem{HP92}
A.~Hartman, K.~T.~Phelps, \emph{Steiner quadruple systems},
in: Contemporary Design Theory: A Collection of Surveys
(eds.\ J.~H.~Dinitz and D.~R.~Stinson), Wiley, New York,
1992, 205--240.

\bibitem{JZ02}
L.~Ji, L.~Zhu, \emph{An improved product construction of rotational
Steiner quadruple systems}, J.\ Combin.\ Des.\ \textbf{10} (2002),
no.\ 6, 433--443.
\url{https://doi.org/10.1002/jcd.10025}

\bibitem{KOP06}
P.~Kaski, P.~R.~J.~\"{O}sterg\aa{}rd, O.~Pottonen,
\emph{The Steiner quadruple systems of order $16$},
J.\ Combin.\ Theory Ser.\ A \textbf{113} (2006), no.\ 8, 1764--1770.
\url{https://doi.org/10.1016/j.jcta.2006.03.017}

\bibitem{PK24}
P.~Keevash, \emph{The existence of designs, Version~4},
preprint, 2024. \url{https://arxiv.org/abs/1401.3665}

\bibitem{KKTVKW25}
M.~Kiermaier, V.~Kr\v{c}adinac, V.~D.~Tonchev, R.~Vlahovi\'{c}
Kruc, A.~Wassermann, \emph{Some new Steiner designs $S(2,6,91)$},
J.\ Algebraic Combin.\ \textbf{62} (2025), article no.\ 38, 11 pp.
\url{https://doi.org/10.1007/s10801-025-01468-6}

\bibitem{Zenodo}
M.~Kiermaier, V.~Kr\v{c}adinac, A.~Wassermann,
\emph{Small Steiner $3$-designs} (Data set), Zenodo, 2025.
\url{https://doi.org/10.5281/zenodo.17153238} (Accessed 25 September 2025)

\bibitem{KW23}
M.~Kiermaier, A.~Wassermann,
\emph{Higher incidence matrices and tactical decomposition matrices},
Glas.\ Mat.\ Ser.\ III \textbf{58(78)} (2023), 181--200.
\url{https://doi.org/10.3336/gm.58.2.03}

\bibitem{TPK847}
T.~P.~Kirkman, \emph{On a problem in combinations}, The Cambridge and Dublin
Math.\ J.\ \textbf{2} (1847), 191--204.

\bibitem{DEK00}
D.~E.~Knuth,
\emph{Dancing links}, in: Millennial Perspectives in Computer Science
(eds.\ J.~Davies, B.~Roscoe, and J.~Woodcock), Palgrave Macmillan,
Basingstoke, 2000, 187--214. \url{https://arxiv.org/abs/cs/0011047}

\bibitem{DEK23}
D.~E.~Knuth,
\emph{Downloadable programs}, 2025.
\url{https://www-cs-faculty.stanford.edu/~knuth/programs.html}
(Accessed 25 September 2025)

\bibitem{DEK25}
D.~E.~Knuth,
\emph{The art of computer programming, Volume 4, Fascicle 7:
Constraint satisfaction}, Addison-Wesley, Hoboken, 2025.

\bibitem{KM76}
E.~S.~Kramer, D.~M.~Mesner,
\emph{$t$-designs on hypergraphs},
Discrete Math.\ \textbf{15} (1976), 263--296.
\url{https://doi.org/10.1016/0012-365X(76)90030-3}

\bibitem{PAG}
V.~Kr\v{c}adinac,
\textit{PAG -- Prescribed Automorphism Groups},
Version 0.2.5, 2025 (GAP package). \url{https://vkrcadinac.github.io/PAG/}
(Accessed 25 September 2025)

\bibitem{VK02}
V.~Kr\v{c}adinac,
\emph{Steiner $2$-designs $S(2,5,41)$ with automorphisms of order $3$},
J.\ Combin.\ Math.\ Combin.\ Comput.\ \textbf{43} (2002), 83--99.

\bibitem{KNP11}
V.~Kr\v{c}adinac, A.~Naki\'{c} and M.~O.~Pav\v{c}evi\'{c},
\textit{The Kramer-Mesner method with tactical decompositions: some new
unitals on $65$ points}, J.\ Combin.\ Des.\ \textbf{19} (2011), 290--303.
\url{https://doi.org/10.1002/jcd.20277}

\bibitem{HL85a}
H.~Lenz, \emph{On the number of Steiner quadruple systems},
Mitt.\ Math.\ Sem.\ Giessen \textbf{169} (1985), 55--71.

\bibitem{HL85}
H.~Lenz, \emph{Tripling Steiner quadruple systems},
Ars Combin.\ \textbf{20} (1985), 193--202.

\bibitem{LR76}
C.~C.~Lindner, A.~Rosa, \emph{There are at least $31,021$
nonisomorphic Steiner quadruple systems of order $16$},
Utilitas Math.\ \textbf{10} (1976), 61--64.

\bibitem{LR78}
C.~C.~Lindner, A.~Rosa, \emph{Steiner quadruple systems -- a survey},
Discrete Math.\ \textbf{22} (1978), no.\ 2, 147--181.
\url{https://doi.org/10.1016/0012-365X(78)90122-X}

\bibitem{MP14}
B.~D.~McKay, A.~Piperno,
\textit{Practical graph isomorphism, II,}
J.\ Symbolic Comput.\ \textbf{60} (2014), 94--112.
\url{https://doi.org/10.1016/j.jsc.2013.09.003}

\bibitem{MH72}
N.~S.~Mendelsohn, S.~H.~Y.~Hung,
\emph{On the Steiner systems $S(3,4,14)$ and $S(4,5,15)$},
Utilitas Math.\ \textbf{1} (1972), 5--95.

\bibitem{MN21}
D.~Mez\H{o}fi, G.~P.~Nagy, \emph{New Steiner $2$-designs from old
ones by paramodifications}, Discrete Appl.\ Math.\ \textbf{288}
(2021), 114--122. \url{https://doi.org/10.1016/j.dam.2020.08.026}

\bibitem{OP08}
P.~R.~J.~\"{O}sterg\aa{}rd, O.~Pottonen,
\emph{There exists no Steiner system $S(4,5,17)$},
J.\ Combin.\ Theory Ser.\ A \textbf{115} (2008), 1570--1573.
\url{https://doi.org/10.1016/j.jcta.2008.04.005}

\bibitem{TP22}
T.~Penttila, \emph{Uniqueness of the inversive plane of order sixty-four},
Des.\ Codes Cryptogr.\ \textbf{90} (2022), no.\ 3, 827--834.
\url{https://doi.org/10.1007/s10623-022-01014-6}

\bibitem{LHS24}
L.~H.~Soicher, \emph{DESIGN, The Design Package for GAP}, Version 1.8.2, 2024
(GAP package). \url{https://gap-packages.github.io/design} (Accessed 25 September 2025)

\bibitem{ES96}
E.~Spence, \emph{The complete classification of Steiner systems $S(2,4,25)$},
J.\ Combin.\ Des.\ \textbf{4} (1996), no.\ 4, 295--300.
\url{https://doi.org/10.1002/(SICI)1520-6610(1996)4:4<295::AID-JCD6>3.3.CO;2-X}

\bibitem{JT62}
J.~Tits, \emph{Ovo\"{i}des et groupes de Suzuki},
Arch.\ Math.\ \textbf{13} (1962), 187–198.
\url{https://doi.org/10.1007/BF01650065}

\bibitem{JAT94}
J.~A.~Thas, \emph{The affine plane $AG(2,q)$, $q$ odd, has a unique one point extension},
Invent.\ Math.\ \textbf{118} (1994), no.\ 1, 133--139.
\url{https://doi.org/10.1007/BF01231529}

\bibitem{EW38}
E.~Witt, \emph{\"{U}ber Steinersche Systeme},
Abh.\ Math.\ Sem.\ Univ.\ Hamburg \textbf{12} (1938), 265--275.
\url{https://doi.org/10.1007/BF02948948}

\bibitem{ZG13}
X.~Zhang, G.~Ge, \emph{A new existence proof for Steiner quadruple systems},
Des.\ Codes Cryptogr.\ \textbf{69} (2013), no.\ 1, 65--76.
\url{https://doi.org/10.1007/s10623-012-9621-6}

\end{thebibliography}
\end{document}